\newtheorem{theorem}{Theorem}[section]
\newtheorem{proposition}[theorem]{Proposition}
\newtheorem{claim}[theorem]{Claim}
\newtheorem*{claim*}{Claim}
\newtheorem{Main Conjecture}[theorem]{Main Conjecture}
\theoremstyle{remark}
\newtheorem{example}[theorem]{Example}
\theoremstyle{plain}
\newtheorem{question}{Question}
\newcommand{\cellsize}{19}
\newlength{\cellsz} \setlength{\cellsz}{\cellsize\unitlength}
\newsavebox{\cell}
\sbox{\cell}{\begin{picture}(\cellsize,\cellsize)
\put(0,0){\line(1,0){\cellsize}}
\put(0,0){\line(0,1){\cellsize}}
\put(\cellsize,0){\line(0,1){\cellsize}}
\put(0,\cellsize){\line(1,0){\cellsize}}
\end{picture}}
\newcommand\cellify[1]{\def\thearg{#1}\def\nothing{}%
\ifx\thearg\nothing
\vrule width0pt height\cellsz depth0pt\else
\hbox to 0pt{\usebox{\cell} \hss}\fi%
\vbox to \cellsz{
\vss
\hbox to \cellsz{\hss$#1$\hss}
\vss}}
\newcommand\tableau[1]{\vtop{\let\\\cr
\baselineskip -16000pt \lineskiplimit 16000pt \lineskip 0pt
\ialign{&\cellify{##}\cr#1\crcr}}}
\DeclareMathOperator{\wt}{{\tt wt}}
\newcommand{\gap}{\hspace{1in} \\ \vspace{-.2in}}
\newcommand{\excise}[1]{}
\begin{document}
\pagestyle{plain}

\title{Vanishing of Littlewood-Richardson polynomials is in ${\sf P}$}
\author{Anshul Adve}
\author{Colleen Robichaux}
\author{Alexander Yong}
\address{Dept.~of Mathematics, U.~Illinois at Urbana-Champaign, Urbana, IL 61801, USA} 
\email{anshul.adve@gmail.com, cer2@illinois.edu, ayong@uiuc.edu}
\date{August 14, 2017}

\begin{abstract}
J.~DeLoera-T.~McAllister and K.~D.~Mulmuley-H.~Narayanan-M.~Sohoni independently
proved that determining the vanishing of Littlewood-Richardson coefficients has strongly polynomial time computational complexity. Viewing these as Schubert calculus numbers, we prove the generalization to the Littlewood-Richardson polynomials that control equivariant cohomology of Grassmannians. We construct a polytope using the edge-labeled tableau rule of 
H.~Thomas-A.~Yong. Our proof then  combines a saturation theorem of D.~Anderson-E.~Richmond-A.~Yong, a reading order independence property, and \'E.~Tardos' algorithm for combinatorial linear programming.
\end{abstract}

\maketitle
\section{Introduction}

Let $\lambda=(\lambda_1\geq \lambda_2\geq \cdots \geq \lambda_n\geq 0)$ be a partition with $n$ nonnegative parts.
We identify it in the usual manner with its Ferrers/Young diagram, where the 
$i$-th row consists of $\lambda_i$ boxes. Consider
a grid with $n$ rows and $m \geq n + \lambda_1 - 1$ columns. Place $\lambda$ in the northwest corner; this is the
{\bf initial diagram} for $\lambda$. 

For example,
if $\lambda=(4,2,1,0,0)$, the
initial diagram is the first
of the three below.
\[\left[\begin{matrix}
+ & + & + & + & \cdot & \cdot & \cdot & \cdot \\
+ & + & \cdot & \cdot & \cdot & \cdot & \cdot & \cdot \\
+ & \cdot & \cdot & \cdot & \cdot & \cdot & \cdot & \cdot \\
\cdot & \cdot & \cdot & \cdot & \cdot & \cdot & \cdot & \cdot \\
\cdot & \cdot & \cdot & \cdot & \cdot & \cdot & \cdot & \cdot  
\end{matrix}\right] \ \ \ 
\left[\begin{matrix}
+ & + & + & \cdot & \cdot & \cdot & \cdot & \cdot \\
+ & + & \cdot & \cdot & + & \cdot & \cdot & \cdot \\
\cdot & \cdot & \cdot & \cdot & \cdot & \cdot & \cdot & \cdot \\
\cdot & + & \cdot & \cdot & \cdot & \cdot & \cdot & \cdot \\
\cdot & \cdot & \cdot & \cdot & \cdot & \cdot & \cdot & \cdot  
\end{matrix}\right] \ \ \ 
\left[\begin{matrix}
+ & \cdot & \cdot & \cdot & \cdot & \cdot & \cdot & \cdot \\
+ & \cdot & + & + & + & \cdot & \cdot & \cdot \\
\cdot & \cdot & \cdot & \cdot & \cdot & \cdot & \cdot & \cdot \\
\cdot & + & \cdot & \cdot & \cdot & \cdot & \cdot & \cdot \\
\cdot & \cdot & \cdot & \cdot & + & \cdot & \cdot & \cdot  
\end{matrix}\right]
\]
A {\bf local move} is a mutation of any $2\times 2$ subsquare of the form
\[
\begin{matrix}
+ & \cdot\\
\cdot & \cdot
\end{matrix}\ \ \  \ \ 
\mapsto  \ \ \ \ \ 
\begin{matrix}
\cdot & \cdot\\
\cdot & +
\end{matrix}\]
A {\bf plus diagram} is a configuration of $+$'s in the grid resulting
from successive applications of the local move to the initial diagram for $\lambda$. Above one sees two more of the many other plus diagrams for $\lambda=(4,2,1,0,0)$.

Let ${\tt Plus}(\lambda)$ be the set of plus diagrams for $\lambda$. Given $P\in {\tt Plus}(\lambda)$, let ${\wt}_x(P)$ be the monomial $x_1^{\alpha_1} x_2^{\alpha_2} \cdots x_n^{\alpha_n}$ where $\alpha_i$ is the number of $+$'s in row $i$ of $P$. A finer statistic is 
\[{\wt }_{x,y}(P)=\prod_{(i,j)}x_i-y_j,\] 
where the product is over all $(i,j)$ such that there is a $+$ in row $i$ and column $j$ of $P$.  For example, if $P$ is the rightmost diagram above, then
\[{\tt wt}_x(P)=x_1 x_2^4 x_4 x_5 \text{ \and \ } {\tt wt}_{x,y}(P)= 
(x_1-y_1)(x_2-y_1)(x_2-y_3)(x_2-y_4)(x_2-y_5)(x_4-y_2)(x_5-y_5).\]

Let $X=\{x_1,x_2,\ldots,x_n\}$ and $Y=\{y_1,y_2,\ldots\}$ be two collections of indeterminates. We consider two
generating series, the second being a refinement of the first:
\[s_{\lambda}(X)=\sum_{P\in {\tt Plus}(\lambda)}  {\wt}_x(P) \text{ \ \ and \ \ } s_{\lambda}(X; Y)=\sum_{P\in {\tt Plus}(\lambda)}  {\wt}_{x,y}(P).\]
These are the {\bf Schur polynomial} and {\bf factorial Schur polynomial}, respectively. A more standard description of these polynomials involves semistandard Young tableaux, see, e.g., \cite{Macdonald} and the references therein. The description above arises in, e.g., \cite{KMY}.

Let ${\sf Sym}[X]$ denote the ring of symmetric polynomials in $X$. Then the set of Schur polynomials $s_{\lambda}(X)$
(over partitions $\lambda$ with at most $n$ rows) is a ${\mathbb Z}$-linear basis of ${\sf Sym}[X]$. Analogously, the factorial Schur polynomials 
form a 
${\mathbb Z}[Y]$-linear basis of ${\sf Sym}[X]\otimes_{\mathbb Q} {\mathbb Z}[Y]$.

The structure constants with respect to these bases are defined by
\[s_{\lambda}(X)s_{\mu}(X)
=\sum_{\nu}c_{\lambda,\mu}^{\nu} s_{\nu}(X) \text{\ \ and \ \ } s_{\lambda}(X; Y)s_{\mu}(X; Y)
=\sum_{\nu}C_{\lambda,\mu}^{\nu} s_{\nu}(X; Y).\]
Here, $c_{\lambda,\mu}^{\nu}$ is the {\bf Littlewood-Richardson coefficient}; this is known to be a nonnegative integer. Following the terminology of \cite{Molev}, the {\bf Littlewood-Richardson polynomial} is $C_{\lambda,\mu}^{\nu}\in {\mathbb Z}[Y]$. 
These latter coefficients
generalize the former, 
i.e., 
\[c_{\lambda,\mu}^{\nu}=C_{\lambda,\mu}^{\nu}|_{y_1=0,y_2=0,\ldots}.\]
In general, $c_{\lambda,\mu}^{\nu}=0$ unless $|\lambda|+|\mu|=|\nu|$ whereas
$C_{\lambda,\mu}^{\nu}=0$ unless $|\lambda|+|\mu|\geq |\nu|$, where here $|\lambda|=\sum_i \lambda_i$.
It is a theorem of W.~Graham \cite{Graham}
that $C_{\lambda,\mu}^{\nu}$ is uniquely expressible as a polynomial, with nonnegative integer coefficients in the variables $\{\beta_i:=y_{i+1}-y_i:i\geq 1\}$.

For example, the interested reader may verify that
\[s_{(1,0)}(x_1,x_2;Y)^2=s_{(2,0)}(x_1,x_2;Y)
+s_{(1,1)}(x_1,x_2;Y)+(y_3-y_2)s_{(1,0)}(x_1,x_2;Y).\]

J.~DeLoera-T.~McAllister \cite{DeLoera} and
K.~D.~Mulmuley-H.~Narayanan-M.~Sohoni \cite{MuNaSo12} independently proved that the vanishing problem for $c_{\lambda,\mu}^{\nu}$ has strongly polynomial time complexity. The following result completes the parallel above:

\begin{theorem}
\label{thm:main}
The vanishing of $C_{\lambda,\mu}^{\nu}$ can be decided in strongly polynomial time.
\end{theorem}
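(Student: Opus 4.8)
The plan is to reduce the vanishing of $C_{\lambda,\mu}^{\nu}$ to a linear feasibility question of polynomial size with a small coefficient matrix, and then invoke \'E.~Tardos' algorithm; Graham positivity will remove cancellation, and the Anderson--Richmond--Yong saturation theorem will bridge the polytope and its lattice points.

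\emph{Step 1: reduce to existence of an edge-labeled tableau.} The Thomas--Yong edge-labeled tableau rule expresses $C_{\lambda,\mu}^{\nu}$ as a sum, with nonnegative coefficients and no cancellation, over a finite set $\mathcal{T}=\mathcal{T}(\lambda,\mu,\nu)$ of ballot edge-labeled semistandard tableaux of skew shape $\nu/\lambda$ with box content $\mu$, the contribution of each $T\in\mathcal{T}$ being a product of factors $y_j-y_i$ (one per edge label of $T$), hence a nonzero polynomial that is positive in the variables $\beta_k=y_{k+1}-y_k$. Thus a single $T\in\mathcal{T}$ already forces $C_{\lambda,\mu}^{\nu}\neq0$ — its $\beta$-positive contribution cannot be killed by the others, by Graham's theorem — while $\mathcal{T}=\emptyset$ trivially gives $C_{\lambda,\mu}^{\nu}=0$. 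So it suffices to decide emptiness of $\mathcal{T}(\lambda,\mu,\nu)$ in strongly polynomial time; note that $|\nu/\lambda|$, and hence the tableaux, may be exponentially large in the bit-length of the input, so enumeration is not an option.

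\emph{Step 2: build the polytope.} Modeling on the Gelfand--Tsetlin/hive description of ordinary Littlewood--Richardson fillings, I would coordinatize a tableau $T$ by the numbers $r_{i,k}$ of boxes in row $k$ with entry $i$, together with the numbers $e_{i,k}$ of edge labels $i$ on the horizontal edges below row $k$: a total of $O(n^2)$ coordinates, $n$ bounding the number of parts of $\lambda,\mu,\nu$. Since within a row the box entries weakly increase, and the edge labels are pinned down by their multiplicities once the semistandardness constraints are imposed, this data reconstructs $T$. The shape condition, the content equations $\sum_k(r_{i,k}+e_{i,k})=\mu_i$, and semistandardness (rows weakly increasing, columns strictly increasing, with the prescribed interaction of box and edge entries) all become linear (in)equalities with coefficients in $\{0,\pm1\}$ among partial sums of the $r_{i,k},e_{i,k}$, exactly as interlacing inequalities encode Gelfand--Tsetlin patterns. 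The delicate point is the ballot condition, which a priori refers to a one-dimensional reading word; here I use the reading-order independence property to pass to a row reading order, for which ``every prefix has at least as many $i$'s as $(i+1)$'s'' collapses to $O(n^2)$ inequalities among the partial sums $\sum_{k\le K}r_{i,k}$, $\sum_{k\le K}e_{i,k}$, again with $\{0,\pm1\}$ coefficients. Let $\mathcal{P}(\lambda,\mu,\nu)=\{x:Ax\le b\}\subseteq\mathbb{R}^{O(n^2)}$ be the resulting polytope: $A$ is a fixed-shape $\{0,\pm1\}$-matrix of polynomial size and $b$ depends linearly on $\lambda,\mu,\nu$. By construction the lattice points of $\mathcal{P}(\lambda,\mu,\nu)$ are in bijection with $\mathcal{T}(\lambda,\mu,\nu)$, and since all defining data scale linearly, $N\cdot\mathcal{P}(\lambda,\mu,\nu)=\mathcal{P}(N\lambda,N\mu,N\nu)$ for every integer $N\ge1$.

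\emph{Step 3: saturation and Tardos' algorithm.} I claim $\mathcal{T}(\lambda,\mu,\nu)\neq\emptyset\iff\mathcal{P}(\lambda,\mu,\nu)\neq\emptyset$. One direction is immediate. Conversely, if $\mathcal{P}(\lambda,\mu,\nu)\neq\emptyset$ then, being a rational polytope, some dilate $N\cdot\mathcal{P}(\lambda,\mu,\nu)=\mathcal{P}(N\lambda,N\mu,N\nu)$ contains a lattice point, i.e.\ $\mathcal{T}(N\lambda,N\mu,N\nu)\neq\emptyset$, i.e.\ $C_{N\lambda,N\mu}^{N\nu}\neq0$; the Anderson--Richmond--Yong saturation theorem then gives $C_{\lambda,\mu}^{\nu}\neq0$, hence $\mathcal{T}(\lambda,\mu,\nu)\neq\emptyset$ by Step 1. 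Therefore $C_{\lambda,\mu}^{\nu}=0$ if and only if the system $Ax\le b$ is infeasible over $\mathbb{R}$. Since $A$ has entries of bounded size and polynomially many rows and columns in $n$, \'E.~Tardos' algorithm for combinatorial linear programming decides this infeasibility in a number of arithmetic operations polynomial in the size of $A$ alone, on numbers of polynomially bounded bit-length — that is, in strongly polynomial time, the entries $\lambda_i,\mu_i,\nu_i$ of the input entering only through $b$. This proves Theorem~\ref{thm:main}.

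\emph{Main obstacle.} The crux is Step 2: turning the ballot condition for \emph{edge-labeled} tableaux — where box labels and edge labels interleave in the reading word in a way with no analogue for ordinary semistandard tableaux — into an honest polytope with a $\{0,\pm1\}$ (or at least small) constraint matrix, and checking that the chosen coordinates faithfully recover the tableaux, so that both the lattice-point bijection and the dilation identity $N\cdot\mathcal{P}(\lambda,\mu,\nu)=\mathcal{P}(N\lambda,N\mu,N\nu)$ (needed to apply the saturation theorem) hold. Establishing and then correctly exploiting the reading-order independence property is precisely what makes this step go through.
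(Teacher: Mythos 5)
Your overall strategy is the paper's: encode row counts of box and edge labels as coordinates, cut out a polytope by linear constraints with $\{0,\pm1\}$ coefficients, use homogeneity to get $N{\mathcal P}_{\lambda,\mu}^{\nu}={\mathcal P}_{N\lambda,N\mu}^{N\nu}$, invoke the Anderson--Richmond--Yong saturation theorem to pass between real feasibility and lattice points, and finish with Tardos' algorithm. Steps 1 and 3 are fine. But Step 2, which you yourself flag as the crux, contains a genuine gap, and in fact a false claim: with the coordinates $r_{i,k},e_{i,k}$ (row multiplicities of box and edge labels) the lattice points of the polytope are \emph{not} in bijection with the edge-labeled tableaux, because these counts do not determine the column positions of the edge labels. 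The paper's Example~\ref{exa:0809} exhibits four distinct edge-labeled tableaux with identical row statistics, of which one is not lattice and three are; consequently latticeness of the reading word is not even a function of your coordinates, and no system of linear inequalities in them can ``collapse'' the ballot condition exactly, nor can the coordinates ``faithfully recover the tableaux.'' The paper explicitly renounces any such counting bijection.

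What is actually needed, and what your proposal omits, is the weaker but still nontrivial equivalence: the row-by-row partial-sum inequalities (the paper's condition (F), together with the gap/semistandardness constraints (D), (E)) are satisfied by the counts of any row-lattice tableau, and conversely every integer point of the polytope is realized by \emph{some} lattice tableau. The converse is the delicate part: condition (F) only guarantees latticeness after each full row of box labels is read, not while reading the edge labels of a row $i+\frac12$, so one must choose a canonical placement --- the paper pushes the edge labels as far right as possible --- and then prove (the paper's Claim~\ref{claim:edgePermutability}, via a two-case analysis using semistandardness and the rightmost-placement property) that no violation can occur mid-edge-row. Without this argument your reduction from tableau existence to polytope feasibility does not go through. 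A secondary point: you invoke ``the reading-order independence property'' as if it were available off the shelf, but for edge-labeled tableaux this is a new statement (the paper's Theorem~\ref{theorem:readOrder}) that must itself be proved before the row-reading reformulation of the ballot condition is legitimate.
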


In contrast, H.~Narayanan \cite{Nara} has shown that computation of $c_{\lambda,\mu}^{\nu}$ is a $\#{\sf P}$-complete problem in L.~Valiant's complexity theory for counting problems \cite{Valiant}. 
Since $c_{\lambda,\mu}^{\nu}$ is a special case of $C_{\lambda,\mu}^{\nu}$, it follows from any of the combinatorial rules in Section~\ref{section:rule} that determining the value of $C_{\lambda,\mu}^{\nu}|_{\beta_i=1}\in {\mathbb Z}_{\geq 0}$ is also $\#{\sf P}$-complete. In particular, no polynomial time algorithm for either counting problem  can exist unless ${\sf P}={\sf NP}$.

Our proof is a modification of the argument used in \cite{DeLoera, MuNaSo12}. In Section~\ref{section:mainProof}, we construct, with explicit inequalities, a polytope ${\mathcal P}_{\lambda,\mu}^{\nu}$ with the property that 
${\mathcal P}_{\lambda,\mu}^{\nu}$ has a lattice point if and only if 
$C_{\lambda,\mu}^{\nu}\neq 0$. Now, if ${\mathcal P}_{\lambda,\mu}^{\nu}$ is nonempty,
it has a rational vertex. In that case, some dilation $N{\mathcal P}_{\lambda,\mu}^{\nu}$ contains an integer lattice point. Moreover, by our construction, 
$N{\mathcal P}_{\lambda,\mu}^{\nu}={\mathcal P}_{N\lambda,N\mu}^{N\nu}$, 
which means $C_{N\lambda,N\mu}^{N\nu}\neq 0$. Thus, by a saturation theorem of D.~Anderson, E.~Richmond, and the third author
\cite{AnRiYo13}, we conclude
\[C_{\lambda,\mu}^{\nu}\neq 0\iff C_{N\lambda,N\mu}^{N\nu}\neq 0 \iff
{\mathcal P}_{\lambda,\mu}^{\nu}\neq \emptyset.\] 
To determine if ${\mathcal P}_{\lambda,\mu}^{\nu}\neq \emptyset$, one 
needs to ascertain feasiblity of any linear programming problem involving ${\mathcal P}_{\lambda,\mu}^{\nu}$. Famously, the Klee-Minty cube shows that the practically efficient simplex method has
exponential worst-case complexity. Instead, one can
appeal to  ellipsoid/interior point methods for polynomiality. Better yet,
our inequalities are of the form
$A{\bf x}\leq {\bf b}$ where 
the entries of $A$ are from $\{-1,0,1\}$ and the vector ${\bf b}$ is integral.
Hence our polytope is \emph{combinatorial} and so one  can achieve
strongly polynomial time complexity using \'E.~Tardos' algorithm; see
\cite{anothertardos, Tardos}. 

We point out some aspects of our modification.
In \cite{DeLoera, MuNaSo12} the authors use the original saturation theorem 
of A.~Knutson-T.~Tao \cite{Knutson.Tao:puzzles}. In addition, the polytope used has
precisely $c_{\lambda,\mu}^{\nu}$ many lattice points. Our polytope does not have any such exact counting feature. To construct it, we need to deduce a new result about the edge-labeled tableau rule of H.~Thomas and the third author \cite{ThYo12}. The remainder of our argument is Proposition \ref{prop:finale}.

In recent years there has been significant work on the complexity of computing Kronecker coefficients; see, e.g., \cite{Burgisser, PakPanova, Iken} and the references therein.
These are an extension of the Littlewood-Richardson coefficients in the context of the representation theory of the symmetric group. This paper initiates a study of the analogous complexity issues in Schubert calculus, by interpreting the Littlewood-Richardson coefficients as triple intersections of Schubert varieties in the Grassmannian; see Section~\ref{section:Schubert} for further discussion and open problems.

\section{A factorial Littlewood-Richardson rule}\label{section:rule}

A.~Molev-B.~Sagan \cite{Molev.Sagan} gave the first combinatorial rule for $C_{\lambda,\mu}^{\nu}$. The first
rule that exhibited the positivity of \cite{Graham} was found by A.~Knutson-T.~Tao \cite{Knutson.Tao:puzzles} in terms of \emph{puzzles}. Later V.~Kreiman \cite{Kreiman} and A.~Molev \cite{Molev} independently gave essentially equivalent tableaux rules with the same positivity property. We also mention P.~Zinn-Justin's \cite{Paul} which gives a quantum integrability proof of the puzzle rule.

Actually, we will use yet another rule, due to H.~Thomas and the third author \cite{ThYo12}. This is also the rule utilized in the proof of the saturation theorem \cite{AnRiYo13} that we need. Indeed, we will observe a new property of the rule that may be of some independent interest.

\subsection{The edge-labeled rule}

We now recall the rule for $C_{\lambda,\mu}^{\nu}$ from \cite{ThYo12}.

Suppose $\lambda\subseteq \nu$. An {\bf edge-labeled tableau} $T$ of skew shape $\nu/\lambda$ and content $\mu$ is an assignment of $
\mu_i$ many labels $i$ to the boxes
of $\nu/\lambda$ and the horizontal edges weakly south of the ``southern border'' of $\lambda$ (thought of as a lattice path, in the usual way). Each box contains exactly one label. Each edge contains a (possibly empty) set of labels. 
Moreover:

\begin{itemize}
\item[(i)] the \emph{box} labels weakly increase along rows;
\item[(ii)] the labels strictly increase along columns; and
\item[(iii)] no edge label $k$ is {\bf too high}: i.e., weakly north of the the top edge of a box in row $k$. 
\end{itemize}

We will refer to (i) and (ii) as {\bf semistandardness} conditions.

A tableau is {\bf lattice} if 
for every label $k$ and column $j$, the number of $k$'s that appear in column $j$ and to the right is weakly greater than the number of $k+1$'s that appear in the same region. This can be stated in terms of a column reading word $w_{c}(T)$, obtained by reading the columns top to bottom, right to left. When reading a set-valued edge, read entries in increasing order. 

We will also need the row reading word $w_{r}(T)$. This is obtained by reading the rows right to left
and
top to bottom, and reading set-valued edges in increasing order.

We say a word is a {\bf lattice} if for every $t$ and label $k$, in reading the first $t$ letters, there are weakly more $k$ than $k+1$. 

For example, 
\[w_{c}(T)=1 \ 2 \ 1 \ 2 \ 3 \ 1 \ 3 \ 1 \ 2 \ 3 
\text{ \ \ and $w_{r}(T)=1 \ 1 \ 2 \ 2 \ 3 \ 1 \ 1 \ 3 \ 2 \ 3$}\] where

\label{exa:1.1}
\[\begin{picture}(100,50)
\put(0,35){$T=\tableau{{\ }&{\ }&{ 1 }&{1}\\{\ }&{1}\\{2}&{3}}$}
\put(31,-7){$3$}
\put(31,12){$1$}
\put(66,30){$23$}
\put(88,30){$2$}
\end{picture}\]

Notice that both words are lattice. This is the point of Theorem~\ref{theorem:readOrder} below. In addition, the edge labels on the southern border of the first row are precisely those that are too high. 

Let ${\tt EdgeTab}_{\lambda,\mu}^{\nu}$ be the set of edge labeled tableaux $T$ such that $w_c(T)$ is lattice. The main theorem of \cite{ThYo12} is that there is a weight ${\tt apwt}(T)$ such that
\[C_{\lambda,\mu}^{\nu}=\sum_{T\in {\tt EdgeTab}_{\lambda,\mu}^{\nu}} {\tt apwt}(T).\]
We do not actually need ${\tt apwt}(T)$ in this paper, so we supress this detail. Instead, to discuss nonvanishing, we only need the following immediate consequence \cite[Corollary~3.3]{AnRiYo13}:

\begin{proposition}
$C_{\lambda,\mu}^{\nu}= 0$ if and only if ${\tt EdgeTab}_{\lambda,\mu}^{\nu}=\emptyset$.
\end{proposition}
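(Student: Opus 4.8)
The plan is to extract both implications from the Thomas--Yong identity $C_{\lambda,\mu}^{\nu}=\sum_{T\in {\tt EdgeTab}_{\lambda,\mu}^{\nu}} {\tt apwt}(T)$, together with one structural fact about the summands: each ${\tt apwt}(T)$ is a \emph{nonzero} polynomial, all of whose coefficients are nonnegative once it is expanded in the variables $\beta_i=y_{i+1}-y_i$. Granting this, the ``only if'' direction is immediate, since if ${\tt EdgeTab}_{\lambda,\mu}^{\nu}=\emptyset$ then the right-hand side is an empty sum, hence $0$. The ``if'' direction asserts that a nonempty finite sum of nonzero, $\beta$-nonnegative polynomials is again nonzero; this holds because such polynomials all lie in the cone ${\mathbb Z}_{\geq 0}[\beta_1,\beta_2,\ldots]$, within which no cancellation can occur — every monomial appearing with a positive coefficient in some ${\tt apwt}(T)$ still appears with a positive coefficient in the total.

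It therefore remains to verify the structural fact, and here I would unwind the definition of ${\tt apwt}(T)$ from \cite{ThYo12}: it is a product of linear factors, one contributed by each edge label of $T$, each factor having the shape $y_a-y_b$ with $a>b$, where the strict inequality is exactly what the ``not too high'' condition (iii) (together with semistandardness) guarantees. Since $y_a-y_b=\beta_b+\beta_{b+1}+\cdots+\beta_{a-1}$ whenever $a>b$, every such factor is a nonzero element of ${\mathbb Z}_{\geq 0}[\beta_1,\beta_2,\ldots]$, and a product of finitely many of them is again a nonzero element of that cone; when $T$ carries no edge labels the product is empty and ${\tt apwt}(T)=1$. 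This is precisely the way the Thomas--Yong rule makes the positivity theorem of W.~Graham \cite{Graham} manifest, and this observation is what yields \cite[Corollary~3.3]{AnRiYo13}.

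Assembling the pieces: if ${\tt EdgeTab}_{\lambda,\mu}^{\nu}\neq\emptyset$, choose any $T$ in it; then ${\tt apwt}(T)$ has some monomial with strictly positive coefficient, and adding the nonnegative contributions of the remaining tableaux cannot destroy it, so $C_{\lambda,\mu}^{\nu}\neq 0$. The only real work is the bookkeeping in the previous paragraph — confirming that the definition of ${\tt apwt}$ genuinely produces factors $y_a-y_b$ with $a>b$, rather than with $a=b$, which would force ${\tt apwt}(T)$ to vanish — and this is exactly what condition (iii) ensures. Consequently I expect no serious obstacle: the substance of the statement is entirely inherited from the Thomas--Yong rule.
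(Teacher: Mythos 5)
Your argument is correct and is essentially the argument the paper relies on: the paper cites \cite[Corollary~3.3]{AnRiYo13} as an immediate consequence of the Thomas--Yong rule, and that consequence is obtained exactly as you do, from the fact that each ${\tt apwt}(T)$ is a nonzero product of Graham-positive factors $y_a-y_b$ with $a>b$ (nonvanishing being guaranteed by the ``too high'' restriction), so that no cancellation can occur in the sum. No gaps; your fleshing-out of the weight's structure matches \cite{ThYo12}.
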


\subsection{Reading order independence}

It is well-known to experts in the theory of Young tableaux that ``any reasonable reading order works''. An instantiation of this imprecise statement is that a (classical, i.e., non-edge-labeled) semistandard tableaux is lattice for the column reading word (top to bottom, right to left) if and only if it is lattice for the row reading word (right to left, top  to bottom).

The original formulation of the rule from \cite{ThYo12} uses column reading order. However, since the saturation property concerns stretching rows, we will need the following fact:

\begin{theorem}\label{theorem:readOrder}
Let $T$ be an edge-labeled tableau. Then $w_c(T)$ is lattice if and only if $w_r(T)$ is lattice. 
\end{theorem}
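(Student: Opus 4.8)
The plan is to isolate a reading-order-free condition on $T$ and show that latticeness of either reading word is equivalent to it. First I would pass to a two-letter alphabet: a word is lattice if and only if, for every $k$, its subword on $\{k,k+1\}$ is a ballot sequence. Fixing $k$, restrict $T$ to the cells and edges labeled $k$ or $k+1$. Semistandardness then gives: each column holds at most one label $k$ and at most one label $k+1$, the $k$ strictly above when both occur (by (ii)); in each row the box labels $k$ are weakly left of the box labels $k+1$ (by (i)); and (iii) caps how far north a $k$- or $(k{+}1)$-edge-label can lie. Let $(\star)$ be the condition that for every column index $j$ the number of labels $k$ weakly right of column $j$ is at least the number of labels $k+1$ weakly right of column $j$. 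I will prove $w_c(T)$ lattice $\iff(\star)\iff w_r(T)$ lattice, for each $k$, which yields the theorem.

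The equivalence $w_c(T)$ lattice $\iff(\star)$ is, after restriction, essentially the reformulation behind \cite{ThYo12,AnRiYo13}, and I would include its short proof. Write $w_c(T)$ as the concatenation $u_m u_{m-1}\cdots u_1$ of its column readings; by (ii) each $u_j$ is strictly increasing, hence one of $\varnothing$, $k$, $k{+}1$, $k\,(k{+}1)$. A concatenation of strictly increasing blocks is ballot exactly when the ballot inequality holds at the right end of each block, because inside such a block one can create a new deficit only by reading a $k{+}1$ from a block containing no $k$, and that is governed by the count entering the block; the count entering $u_j$ is precisely $(\star)$ at column $j$.

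The heart of the matter is $w_r(T)$ lattice $\iff(\star)$. Here I would first prove a ``no southeast inversion'' lemma: in an edge-labeled tableau no label $k$ occupies a cell or edge that lies weakly south and weakly east of, and distinct from the position of, a label $k+1$. For box labels this is immediate from (i)--(ii): examining the cell sharing its row with the $k$ and its column with the $k+1$, those conditions force its label to be simultaneously $\le k$ and $\ge k+2$ (and one checks this cell lies in the skew shape). Condition (iii) is exactly what rules out the remaining configurations, in which the $k$- or $(k{+}1)$-label in question is an edge label that (i)--(ii) alone do not constrain. Granting the lemma, any label $k$ lying weakly east of a label $k+1$ must in fact lie weakly north of it, hence is read earlier in $w_r(T)$; therefore the parenthesis matching supplied by $(\star)$ -- scanning columns right to left and pairing each $k+1$ with a distinct $k$ in a weakly righter column -- is simultaneously a matching of each $k+1$ with an earlier-read $k$ in $w_r(T)$, which certifies that $w_r(T)$ is lattice. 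Conversely, if $(\star)$ fails at a column $j$, a Hall-type augmenting argument built on the same lemma -- rerouting any $k+1$ matched too far left, using that a $k$ preceding it in row order but to its west lies strictly north of it -- shows no matching of the $(k{+}1)$'s to earlier-read $k$'s can exist, so $w_r(T)$ is not lattice. With the previous paragraph, this proves the theorem.

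The main obstacle is the ``no southeast inversion'' lemma together with its use in the $w_r$ direction, and inside that, the bookkeeping of edge labels. Because an edge label lives between two rows, one must pin down precisely when it is read in $w_r(T)$ versus $w_c(T)$ and which edges may legally carry a label $k$ or $k+1$; this is exactly where condition (iii) is indispensable, since without the ``not too high'' hypothesis a $(k{+}1)$-edge-label could be read, in row order, before the $k$ directly above it in its column, and the matching argument would collapse. The box-only case of the entire argument is the classical ``any reasonable reading order works'' folklore recalled above.
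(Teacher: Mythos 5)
Your reduction of $w_c$-latticeness to the column-count condition $(\star)$ is fine (it is essentially the definition of lattice given for tableaux), and routing $w_r$-latticeness through a matching certificate is a legitimate alternative to the paper's argument, which instead compares the prefixes $w_r\mid_{(x,y)}$ and $w_c\mid_{(x,y)}$ at a common position and analyzes the first violation. But there are two problems. First, your ``no southeast inversion'' lemma is false as stated: in the paper's own example $T$ of Section~2, the edge below box $(1,4)$ carries the label $2$ while the edge below box $(1,3)$ carries $3$, so a label $k=2$ sits weakly south (same edge level) and weakly east of a label $k+1=3$ at a distinct position, in a tableau that is lattice for both readings. What is true, and what your matching argument actually needs, is the weaker statement that no $k$ lies \emph{strictly} south and weakly east of a $k+1$ (a $k$ due east at the same level is still read before the $k+1$ in $w_r$, since levels are scanned right to left and edge sets in increasing order). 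That repaired statement does hold, but your attribution of it to condition (iii) is also off: the configurations (i)--(ii) do not kill are excluded by the requirement that edge labels sit weakly south of the border of $\lambda$ together with column strictness and the partition shape of $\nu$, not by the ``too high'' condition; and your closing scenario (a $(k+1)$-edge label read before the $k$ directly above it in its column) cannot occur in row order regardless of (iii).

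Second, and more seriously, the direction $w_r(T)$ lattice $\Rightarrow(\star)$ is not proved. The sentence about ``a Hall-type augmenting argument built on the same lemma --- rerouting any $k+1$ matched too far left'' is not an argument: if $(\star)$ fails at column $j$, some $k+1$ in a column $\geq j$ must be matched to a $k$ strictly west of column $j$ and strictly north of it, and nothing in your lemma (even in corrected form) explains why such a matching cannot exist. The combinatorial input needed here is exactly the crux of the paper's harder direction: an ``extra'' $k$ lying strictly northwest of the failure position is forced by semistandardness to sit in the row (or on the edge) immediately above row $i$, with a $k+1$ in the box directly below it, so that continuing the row reading through those boxes recreates the deficit of $k$'s and contradicts row-latticeness (with a separate, slightly different analysis when the failure occurs at an edge). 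Until you supply this step --- or some substitute for it --- your equivalence $(\star)\iff w_r$ lattice, and hence the theorem, is only proved in one direction.
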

\begin{proof}
Let $T$ be an edge-labeled tableau. Let 
\[T_{i,j}= \text{\ the label of the box in row $i$ column $j$ (in matrix notation).}\] Similarly,
\[T_{i+\frac{1}{2},j}= \text{the (set) filling of the southern edge of $(i,j)$.}\] 

Accordingly, we let $(x,y)$ denote either a
box or edge position of the tableau, i.e., $(x,y)=(i,j)$ or
$(x,y)=(i+\frac{1}{2},j)$.
Let 
\[w_r\mid_{(x,y)}(T)= \text{\ the row reading word of $T$ ending at $(x,y)$, and}\] 
\[w_c\mid_{(x,y)}(T)= 
\text{\ the column reading word
of $T$ ending at $(x,y)$.}\]

\begin{claim}
\gap
\begin{itemize}
\item[(I)] All labels weakly northeast of $(x,y)$ are read by both
$w_r\mid_{(x,y)}(T)$ and
$w_c\mid_{(x,y)}(T)$.
\item[(II)] If $\ell$ is read by
$w_c\mid_{(x,y)}(T)$ but not
$w_r\mid_{(x,y)}(T)$ then $\ell> T_{x,y}$.
\item[(III)] If $\ell$ is read by $w_r\mid_{(x,y)}(T)$ but
not $w_c\mid_{(x,y)}(T)$ then
$\ell< T_{x,y}$.
\end{itemize}
\end{claim}
\noindent\emph{Proof of claim:}
(I) is by definition. (II) and (III) follow since $T$ is semistandard.
\qed

$(\Rightarrow)$ Suppose $w_c(T)$ is lattice, but $w_r(T)$ is not.
Hence there exists a label $k$ and position $(x,y)$ such that
$w_r\mid_{(x,y)}$ contains more $k+1$'s than $k$'s. We may assume without loss of generality that $(x,y)$ contains $k+1$. Then by (II) and (III) of the claim, the excess of $k+1$'s must be blamed on the region weakly northeast of $(x,y)$. However, (I) then implies $w_c(T)$ is not lattice, a contradiction.

$(\Leftarrow)$ Conversely, suppose $w_r(T)$ is lattice and $w_c(T)$ is not. Take a label $k$ and position $(x,y)$
such that $w_c|_{(x,y)}(T)$ contains more $k+1$'s than $k$'s. We may assume $T_{x,y}=k+1$ and $(x,y)$ is the first (topmost and rightmost) position where such a failure occurs.

\noindent
{\sf Case $1$:} [$(x,y)=(i,j)$ is a box] By (II), among the labels read by $w_c\mid_{(i,j)}(T)$ but not $w_r\mid_{(i,j)}(T)$, no $k$ or $k+1$ appears. Therefore since $w_c\mid_{(i,j)}(T)$ is not lattice, in the region read by both, there are more $k+1$'s than $k$'s.
Since $w_r(T)$ is lattice,
in the region only read by $w_r\mid_{(i,j)}(T)$, there must exist at least one $k$. Where can such an additional $k$ appear? By semistandardness, it must be in row $i-1$, strictly to the left of column $j$, as either a box or edge label. 
Moreover, again by semistandardness,
any such extra $k$ in column $j'<j$ must have a ``paired'' $k+1$ in the box $(i,j')$ below it. Hence, it follows that $w_r(T)$ is also not lattice, a contradiction.

\noindent
{\sf Case 2:} [$(x,y)=(i+\frac{1}{2},j)$]  As in {\sf Case 1}, there must exist an extra $k$ in the region $R$ weakly north of row $i$ and strictly west of column $j$. 
Now, if there is a box label $k$ in $R$, then by semistandardness we conclude $T_{i,j}=k$. This implies $(x,y)$ is not the first violation of latticeness for $w_c(T)$.    
\end{proof}

\section{Proof of the main theorem} \label{section:mainProof}

Suppose $T\in{\tt EdgeTab}_{\lambda,\mu}^{\nu}$. 
Let $r_k^i(T)$ denote the number of $k'$s in the $i$th row of $T$ and $r_{k}^{i+\frac{1}{2}}(T)$ the number of $k$'s in the southern edges of the $i$th row of $T$, where \[k \in \{1,2,\dots,l(\mu)\} \text{\  and $i \in \{1,2,\dots,l(\nu)\}$.}\]
Recall, $l(\mu)$ is the number of nonzero parts of $\mu$, etc. By convention, let 
\[r_{l(\mu)+1}^i = r_k^{l(\nu)+1} = 0.\] 
\begin{example}
For instance, consider
\[\begin{picture}(100,50)
\put(0,35){$T=\tableau{{\ }&{\ }&{ \ }&{ \ }\\{\ }&{\ }&{1}&{1}\\{\ }&{2}&{3}}$}
\put(50,-7){$3$}
\put(50,12){$1$}
\put(69,12){$2$}
\end{picture}\]
Then 
\[r_1^2=2,r_1^{2+\frac{1}{2}}=1, r_2^{2+\frac{1}{2}}=1,
r_2^3=1, r_3^3=1,
r_3^{3+\frac{1}{2}}=1\]
and all other values are zero.\qed 
\end{example}

Next, examine the following conditions (which modify those of a preprint version of \cite{MuNaSo12}):

\begin{itemize}
\item[(A)] Non-negativity: For all $i,k$,
	$$r_{k}^{i}(T)\geq0,r_{k}^{i+\frac{1}{2}}(T)\geq0.$$

\item[(B)] Shape constraints: For all $i$,
	$$\lambda_i + \sum_{k}r_{k}^{i}=\nu_i.$$

\item[(C)] Content constraints: For all $k$,
	$$\sum_{i}r_{k}^{i}+r_{k}^{i+\frac{1}{2}}=\mu_k.$$

\item[(D)] Gap constraints: For all $i,k$,
	$$r_{k}^{i+\frac{1}{2}}\leq \Big(\lambda_{i}+\sum_{k'<k}r_{k'}^{i}\Big) - \Big(\lambda_{i+1} + \sum_{k'\leq k}r_{k'}^{i+1}\Big).$$

\item[(E)] Too high: For all $i<k$,
	$$r_{k}^{i}(T)=0,r_{k}^{i+\frac{1}{2}}(T)=0.$$

\item[(F)] Reverse lattice word constraints: For all $i,k$,
	$$\sum_{i'<i} r_k^{i'} + r_k^{i'+\frac{1}{2}} \geq r_{k+1}^i + \sum_{i'<i}	r_{k+1}^{i'} + r_{k+1}^{i'+\frac{1}{2}}.$$
\end{itemize}

Define a polytope
\[{\mathcal P}_{\lambda,\mu}^{\nu}=\{(r_k^i, r_k^{i+\frac{1}{2}}): \text{(A)--(F)}\}\subseteq {\mathbb R}^{2l(\nu)l(\mu)}.\]

\begin{proposition}
\label{prop:finale}
${\tt EdgeTab}_{\lambda,\mu}^{\nu}\neq\emptyset\iff 
{\mathcal P}_{\lambda,\mu}^{\nu}\cap {\mathbb Z}^{2l(\nu)l(\mu)}\neq \emptyset$.
\end{proposition}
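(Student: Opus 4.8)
The plan is to prove Proposition~\ref{prop:finale} by establishing a bijection-like correspondence between edge-labeled tableaux in ${\tt EdgeTab}_{\lambda,\mu}^{\nu}$ and lattice points of ${\mathcal P}_{\lambda,\mu}^{\nu}$, at least at the level of nonemptiness. The forward direction ($\Rightarrow$) is the easy one: given $T\in{\tt EdgeTab}_{\lambda,\mu}^{\nu}$, I would define the candidate lattice point by $r_k^i(T)$ and $r_k^{i+\frac12}(T)$ as in the setup, and then check that conditions (A)--(F) hold. Here (A) is immediate, (B) records that row $i$ of $T$ has $\nu_i-\lambda_i$ boxes filled by content-$\mu$ labels, (C) records that label $k$ is used exactly $\mu_k$ times, (E) is exactly the ``too high'' condition (iii) together with column-strictness forcing no label $k$ above row $k$ in a box, and (D) encodes the combinatorial content of the ``not too high'' / semistandardness requirement along the southern border: the number of $k$'s that can legally sit on the southern edges of row $i$ is bounded by the horizontal distance between the relevant column positions, which unwinds to the stated inequality once one writes the column-endpoints in row $i$ and row $i+1$ in terms of partial sums of the $r$'s. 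The one genuinely substantive point in this direction is (F): by Theorem~\ref{theorem:readOrder}, $T$ is lattice for $w_r(T)$, i.e.\ the \emph{row} reading word is lattice; translating ``row reading word is lattice'' (reading right-to-left, top-to-bottom, reading edge labels in increasing order) into inequalities on row-counts gives precisely (F), where the left side counts all $k$'s strictly above row $i$, i.e.\ read before row $i$, and the right side counts the $k+1$'s read up through the $k+1$'s in the boxes and edges of row $i$; the fact that within a row the box labels $k$ come before the box labels $k+1$ (weakly increasing along the row, read right-to-left) and the edge labels sit below, is exactly what makes the ``reverse lattice'' inequality (F) the right one.

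For the reverse direction ($\Leftarrow$), given a lattice point $(r_k^i, r_k^{i+\frac12})$ satisfying (A)--(F), I would explicitly construct an edge-labeled tableau $T$ realizing these row-counts and show it lies in ${\tt EdgeTab}_{\lambda,\mu}^{\nu}$. The construction is forced up to the obvious freedom: in row $i$ of the skew shape $\nu/\lambda$, place $r_1^i$ copies of $1$, then $r_2^i$ copies of $2$, etc., left to right (this is the unique weakly increasing arrangement with the prescribed content in that row); on the southern edges of row $i$, the labels $k$ appearing there are determined in number by $r_k^{i+\frac12}$, and we place them in the leftmost legal positions. Condition (B) guarantees the box counts fill row $i$ exactly; (C) guarantees the total content is $\mu$. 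Then I must verify: (i)/(semistandard rows) holds by construction; (E) gives that no too-high box label occurs and combined with the row construction forces column-strictness for box labels in the ``staircase'' part; (D) is exactly the inequality needed to fit the $r_k^{i+\frac12}$ edge labels into the available horizontal gap between where column-$\geq$ constraints from row $i$ end and row $i+1$ begins, which simultaneously ensures edge labels are not too high (iii) and that column-strictness survives when edge labels are inserted; finally (F) together with Theorem~\ref{theorem:readOrder} (applied in reverse) gives that $w_c(T)$ is lattice, so $T\in{\tt EdgeTab}_{\lambda,\mu}^{\nu}$. The key leverage of Theorem~\ref{theorem:readOrder} is that it lets us work entirely with the row reading word, whose lattice condition depends only on the \emph{multiset} of labels in each row and each edge-row (not on horizontal positions within the row), which is exactly the data recorded by the $r$'s — this is what makes the polytope description possible.

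The main obstacle I anticipate is the bookkeeping in conditions (D) and (F): one must carefully set up the column-position / partial-sum identities so that ``semistandard along a column crossing the $\lambda$-border, with edge labels inserted'' becomes exactly inequality (D), and ``$w_r(T)$ lattice'' becomes exactly inequality (F), with the correct off-by-one conventions (the roles of $r_k^{i+\frac12}$ vs.\ $r_k^i$ on the two sides, the boundary conventions $r_{l(\mu)+1}^i=r_k^{l(\nu)+1}=0$, and the ordering of edge labels within a set). In particular, in the $(\Leftarrow)$ direction one must check that the greedy ``leftmost legal'' placement of edge labels does not create a column violation, and that the translation between the multiset data and an actual tableau filling is consistent — i.e.\ that (D) is not just necessary but sufficient for a legal placement to exist. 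I would isolate these as a couple of short lemmas (``given row-counts satisfying (B),(D),(E), there is a semistandard edge-labeled filling'' and ``its row reading word is lattice iff (F) holds''), prove them by the explicit construction above, and then Proposition~\ref{prop:finale} follows by combining them with Theorem~\ref{theorem:readOrder}.
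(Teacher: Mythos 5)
Your overall architecture matches the paper's: the forward direction is a routine verification that the row statistics of a tableau satisfy (A)--(F) (with Theorem~\ref{theorem:readOrder} supplying row-latticeness, hence (F)), and the reverse direction builds a tableau from a lattice point. The forward half is fine. The gap is in the reverse half, at exactly the step the paper flags as nontrivial: you claim that the lattice condition for $w_r(T)$ ``depends only on the multiset of labels in each row and each edge-row (not on horizontal positions within the row),'' and from this conclude that (F) plus Theorem~\ref{theorem:readOrder} forces the constructed tableau to be lattice. That claim is false. Within an edge row $i+\tfrac{1}{2}$ the letters are read right-to-left by column (increasing within a single edge), so which of two tableaux with identical statistics $r_k^{i+\frac{1}{2}}$ is lattice genuinely depends on the columns chosen for the edge labels: Example~\ref{exa:0809} exhibits four edge-labeled tableaux with the same vector $(r_k^i,r_k^{i+\frac{1}{2}})$, of which the first is not lattice. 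Correspondingly, (F) only controls the prefix counts at ``checkpoints'' (after the box labels of each row); it does not a priori rule out a violation occurring partway through reading an edge row, which is why the paper needs Claim~\ref{claim:edgePermutability} and proves it by a case analysis that uses, in an essential way, the choice of placing the edge labels \emph{as far right as possible} (so that any edge $k+1$ lying to the right of the edge $k$'s has a box $k$ directly above it, already read).

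A secondary issue: your construction places edge labels in the \emph{leftmost} legal positions, the opposite of the paper's convention. It is conceivable that a leftmost-greedy placement can also be shown lattice (one would compare the prefix at each edge $k+1$ against (F) for rows $i$ and $i+1$, using that the leftmost legal column for $k$ is $\lambda_{i+1}+\sum_{k'\leq k}r_{k'}^{i+1}+1$), but this requires an argument of the same character as Claim~\ref{claim:edgePermutability}, and you do not supply one; the justification you do give (position-independence) is the false statement above. Until that lemma is proved for your chosen placement, the implication ``lattice point of ${\mathcal P}_{\lambda,\mu}^{\nu}$ $\Rightarrow$ ${\tt EdgeTab}_{\lambda,\mu}^{\nu}\neq\emptyset$'' is not established. (A minor additional slip: in your paraphrase of (F) for the forward direction, the right-hand side of (F) counts only the $k+1$'s in the \emph{boxes} of row $i$ together with everything strictly above, not the $k+1$'s on the edges of row $i+\tfrac{1}{2}$; including the latter would give an inequality that lattice tableaux need not satisfy.)
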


\begin{proof}
($\Rightarrow$) Let $T\in {\tt EdgeTab}_{\lambda,\mu}^{\nu}$. Clearly
$r_k^i(T)$ and $r_k^{i+\frac{1}{2}}(T)$ satisfy (A), (B), (C), and (E) above. The tableau constraint (D) asks that there to be enough edges in row $i+\frac{1}{2}$, between the rightmost $k$ in row $i+1$ and the leftmost $k$ in row $i$, to accommodate $r_k^{i+\frac{1}{2}}$ many $k$'s; this holds by semistandardness of $T$. 
Finally, (F) merely asks that the row word will be lattice after reading all the $k+1$'s in row $i$; this is certainly true of $T$ as it is row lattice. 

($\Leftarrow$) Let 
$(r_k^i, r_k^{i+\frac{1}{2}})\in {\mathcal P}_{\lambda,\mu}^{\nu}\cap {\mathbb Z}^{2l(\nu)l(\mu)}$.  
Construct a tableau $T^\star$ of shape $\nu/\lambda$ and content $\mu$ as follows. First, for all $i,k$, (uniquely) place $r_k^i$ many $k$'s in row $i$, such that the $k$'s are weakly increasing along each row. At this point the tableau has no edge labels but, by (B) has the correct skew shape $\nu/\lambda$. Moreover, (A) and (D) combined implies that for all $i,k$,
	\[\lambda_{i+1} + \sum_{k'\leq k}r_{k'}^{i+1}\leq \lambda_{i}+\sum_{k'<k}r_{k'}^{i}.\]
This precisely asserts that the partially
built up $T^\star$ is column strict.

Next, place $r_k^{i+\frac{1}{2}}$ many $k$'s as far to the right as possible in row $i+\frac{1}{2}$ without breaking the semistandardness of $T$. To be precise, the last $k$ will be in column $\lambda_{i}+\sum_{k'< k}r_{k'}^{i}$ and
the remaining $k$'s will be in adjacent columns to the left, namely columns:
\begin{equation}
\label{eqn:thecolsabc}
\left(\lambda_{i}+\sum_{k'< k}r_{k'}^{i}\right)-r_k^{i+\frac{1}{2}}+1,
\left(\lambda_{i}+\sum_{k'< k}r_{k'}^{i}\right)-r_k^{i+\frac{1}{2}}+2, \ldots,
\lambda_{i}+\sum_{k'< k}r_{k'}^{i}.
\end{equation}

This completes $T^\star$. 

(D) asserts that column strictness is preserved in the final step where we have added the edges: We are placing the $k$'s in row $i+\frac{1}{2}$ to the right of the box labels $\leq k$ in row $i+1$. Also, in row $i$, the columns
(\ref{eqn:thecolsabc}) contain box labels $<k$. Now,
(E) says no edge label is too high. 

However, (F) 
does not \textit{a priori} show $w_r(T^\star)$ is lattice (see Example~\ref{exa:0809} below). Thus we need:

\begin{claim}\label{claim:edgePermutability}
$w_r(T^\star)$ is lattice.
\end{claim}

\noindent\emph{Proof of claim:} Consider
row $i=1$. In this case (F) asserts
$r_{k+1}^1\leq 0$ for all $k\geq 1$. 
In view of (A), this means that there are
no labels greater than $1$ in the first row of $T^{\star}$. Moreover, if we know row latticeness has not failed before reading row $i>1$ then (F) immediately says no violation can occur in row $i$ either.

Therefore, it remains to check that while reading the edge labels in a row $i+\frac{1}{2}$ one always remains 
lattice. Suppose not. (F) combined with (A) implies
\begin{equation}
\label{eqn:says0810}
\sum_{i'<i+1} r_k^{i'} + r_k^{i'+\frac{1}{2}} \geq \sum_{i'<i+1}	r_{k+1}^{i'} + r_{k+1}^{i'+\frac{1}{2}}.
\end{equation}
That is, after reading the \emph{entirety} of row $i + \frac{1}{2}$, the row reading word has at least as many $k$'s as $k+1$'s.

Say edge $(i+\frac{1}{2},j)$ contains a violating label $k+1$ that breaks the latticeness of the row word. We may assume this $k+1$ is first (rightmost) among all such labels. By (\ref{eqn:says0810}), and/or the sentence immediately after it, there must be an ``extra'' edge label $k$ in row $i+\frac{1}{2}$ and in column $j$ or to its 
left. 

\noindent{\sf Case 1:} [$T_{i,j}<k$] The rightmostness of the placement of the $k$'s (see (\ref{eqn:thecolsabc})) implies that $k\in T_{i+\frac{1}{2},j}$. Hence, the row word has more $k+1$'s than $k$'s before reading edge $(i+\frac{1}{2},j)$, a contradiction
of the rightmostness of $k+1$.
That is, this case cannot actually occur.

\noindent{\sf Case 2:} [$T_{i,j}=k$] By semistandardness, for every $k+1$ in an edge $(i+\frac{1}{2},j')$ with $j' \ge j$, there is $k\in T_{i,j'}$. 
It is then straightforward to conclude there are more $k$'s than $k+1$'s before reading the edge $(i+\frac{1}{2},j)$, and in particular, before it even reads row $i$, a contradiction.\qed

This completes the proof of the
proposition.
\end{proof}

\begin{example}\label{exa:0809}
To illustrate the $(\Leftarrow)$ proof above,
let $\lambda=(2,2,1,1,0), \mu=(2,2,2,1,1)$, and $\nu=(2,2,2,2,2)$. Now, $r_1^3 = r_2^4 = r_1^{4+\frac{1}{2}} = r_2^{4+\frac{1}{2}} = r_4^{5+\frac{1}{2}} = r_5^{5+\frac{1}{2}} = 1$ and $r_3^5 = 2$ (all other components are zero) defines a lattice
point in ${\mathcal P}_{\lambda,\mu}^{\nu}$. There are four
edge labeled tableaux
that have these statistics, namely
\[\begin{picture}(350,90)
\put(0,70){$\tableau{{\ }&{\ }\\
{\ }&{\ }\\{\ }&{1 }\\{\ }&{2 }\\{3 }&{3 }}$}
\put(7,-9){$4$}
\put(26,-9){$5$}
\put(4,10){$12$}

\put(100,70){$\tableau{{ \ }&{\ }\\{ \ }&{ \ }\\{\ }&{1 }\\{\ }&{2 }\\{3 }&{3 }}$}
\put(107,-9){$5$}
\put(126,-9){$4$}
\put(104,10){$12$}

\put(200,70){$\tableau{{\ }&{\ }\\{\ }&{\ }\\{\ }&{1 }\\{\ }&{2 }\\{3 }&{3 }}$}
\put(210,-9){$5$}
\put(203,-9){$4$}
\put(204,10){$12$}

\put(300,70){$\tableau{{\ }&{\ }\\{\ }&{\ }\\{\ }&{1 }\\{\ }&{2 }\\{3 }&{3 }}$}
\put(330,-9){$5$}
\put(323,-9){$4$}
\put(304,10){$12$}
\end{picture}\]
\noindent
The first is not lattice, but the other three are. The rightmost of them is $T^{\star}$.\qed
\end{example}

\noindent\emph{Conclusion of proof of Theorem~\ref{thm:main}:} 
Notice that since (A)--(F) is linear and homogeneous in the components of $\lambda$, $\mu$ and $\nu$, it follows that 
${\mathcal P}_{N\lambda,N\mu}^{N\nu}=N{\mathcal P}_{\lambda,\mu}^{\nu}$. 
Thus,
in view of Proposition~\ref{prop:finale}, we have constructed the desired polytope ${\mathcal P}_{\lambda,\mu}^{\nu}$
alluded to in the introduction, which is the only missing component of the argument given there. \qed

Using indicator variables, one easily modifies the above argument to construct a polytope ${\mathcal Q}_{\lambda,\mu}^{\nu}$ whose lattice points are in bijection with the tableaux in
${\tt EdgeTab}_{\lambda,\mu}^{\nu}$. However this polytope does
not satisfy
$N{\mathcal Q}_{\lambda,\mu}^{\nu}={\mathcal Q}_{N\lambda,N\mu}^{N\nu}$. Counting lattice points of this polytope is not equivalent to computing $C_{\lambda,\mu}^{\nu}$ since one needs a weighted count based on ${\tt apwt}$.

\section{Schubert calculus and complexity theory} \label{section:Schubert}

The Littlewood-Richardson polynomials appear in the topic of equivariant Schubert calculus of Grassmannians. The use of ``equivariant'' refers to ``equivariant cohomology,'' a type of enriched cohomology theory. We refer the reader to \cite{Knutson.Tao:puzzles} for background.

Another enriched cohomology theory is $K$-theory
(i.e., the Grothendieck ring of algebraic vector bundles). 
There is a lattice rule \cite{Buch:Acta} for the corresponding $K$-theoretic Littlewood-Richardson coefficients $k_{\lambda,\mu}^{\nu}$ (another lattice rule uses \emph{genomic tableaux} \cite{Pechenik.Yong}). 

\begin{question}
Is the decision problem $k_{\lambda,\mu}^{\nu}=0$ in ${\sf P}$?
\end{question}

One cannot use the same general method of this paper to resolve
Question~1. To be precise,
in \cite[Section~7]{Buch:Acta} it is noted that (up to a sign convention) 
$k_{(1,0),(1,0)}^{(2,1)}=1$ but 
$k_{(2,0),(2,0)}^{(4,2)}=0$. That is, the saturation statement
\[
k_{\lambda,\mu}^{\nu}\neq 0
\implies
k_{N\lambda,N\mu}^{N\nu}\neq 0\]
is false (the truth of the converse is not known). 
Therefore, there cannot exist a polytope ${\mathcal K}_{\lambda,\mu}^{\nu}$ with the dilation property $N{\mathcal K}_{\lambda,\mu}^{\nu}={\mathcal K}_{N\lambda,N\mu}^{N\nu}$ crucial for the argument we use. 

Quantum cohomology of Grassmannians is another deformation of significant interest. Work of A.~Buch-A.~Kresch-K.~Purbhoo-H.~Tamvakis \cite{BKPT} established a combinatorial rule for the coefficients $d_{\lambda,\mu}^{\nu}$ of this case. They proved the two-step case of a 
\emph{puzzle} conjecture of A.~Knutson; see \cite{BKT}. Also, P.~Belkale \cite{Belkale} has established a quantum saturation property for these quantum Littlewood-Richardson coefficients. However, even from these results a solution to the following is not clear to us:
\begin{question}
Is the decision problem $d_{\lambda,\mu}^{\nu}=0$ in ${\sf P}$?
\end{question}

Since there are combinatorial rules for $k_{\lambda,\mu}^{\nu}$ and $d_{\lambda,\mu}^{\nu}$, these problems are in $\#{\sf P}$. They are both 
$\#{\sf P}$-complete problems because they contain as special cases the 
Littlewood-Richardson coefficients, which is $\#{\sf P}$-complete by \cite{Nara}.

Finally, the Schur polynomials are special cases of \emph{Schubert polynomials}
${\mathfrak S}_w(X)$, defined for any permutation $w\in S_n$. 
It is known that ${\mathfrak S}_{w'}={\mathfrak S}_{w}$ if $w'$ is the image of $w$ under 
the natural embedding of $S_n\hookrightarrow S_{n+1}$. Therefore it is unambiguous to refer to ${\mathfrak S}_w$ for $w\in S_{\infty}$. These polynomials form a ${\mathbb Z}$-linear basis of the ring of polynomials ${\mathbb Q}[x_1,x_2,\ldots]$. The structure constants $c_{u,v}^w$ relative to this basis are known to be nonnegative for Schubert calculus reasons. We refer to 
the book \cite{Manivel} for background and references.

It is a longstanding open problem to find a combinatorial rule for $c_{u,v}^w$. That is, it is not known if the problem is in $\#{\sf P}$. Since the Schubert structure constants also contain the Littlewood-Richardson coefficients in a specific way, 
H.~Narayanan's aforementioned theorem implies the problem is $\#{\sf P}$-hard. It should also not be difficult to show (using the Monk-Chevalley formula) that the Schubert problem is
in ${\sf GapP}$, that is, a difference of two $\#{\sf P}$ problems. 
Compare this with results of \cite{Burgisser}
for Kronecker coefficients.

\begin{question}
Is the decision problem  $c_{u,v}^w= 0$ {\sf NP}-hard?
\end{question}
Recently, it was shown that vanishing of Kronecker coefficients is {\sf NP}-hard \cite{Iken}. This establishes a formal difference in difficulty between the Kronecker coefficients and the Littlewood-Richardson coefficients. Inspired by their results, Question~3 asks if one can similarly establish a formal complexity difference in Schubert calculus. In addition, should either decision problem in Questions~1 or 2 be ${\sf NP}$-complete, one can conclude, in the Schubert calculus context, that such a formal difference does not preclude existence of a general combinatorial rule.

\section*{Acknowledgments}
We thank Christian Ikenmeyer for helpful exchanges.
AY was supported by an NSF grant.

\end{document}